\documentclass[12pt]{amsart}

\usepackage{etoolbox}
\makeatletter
\def\@secnumfont{\mdseries}
\def\section{\@startsection{section}{1}%
 \z@{.7\linespacing\@plus\linespacing}{.5\linespacing}%
 {\normalfont\scshape\centering}}
\def\subsection{\@startsection{subsection}{2}%
 \z@{.5\linespacing\@plus.7\linespacing}{-.5em}%
 {\normalfont\bfseries}}

\patchcmd{\@thm}{\let\thm@indent\indent}{\let\thm@indent\noindent}{}{}
\patchcmd{\@thm}{\thm@headfont{\scshape}}{\thm@headfont{\bfseries}}{}{}

\makeatother

\textwidth 6.2in 
\evensidemargin +0.01in 
\oddsidemargin +0.01in 
\usepackage{enumerate}
\usepackage[dvips]{graphicx}
\usepackage{hyperref,color,mathdots,accents,extarrows,bigints}
\usepackage{stmaryrd}
\usepackage[all]{xypic}
\usepackage[ansinew]{inputenc}
\usepackage{amsmath,amsfonts,amssymb}
\usepackage[all,graph]{xy}
\usepackage[shortcuts]{extdash}

\theoremstyle{plain}
\newtheorem{theorem}{Theorem}[section]
\newtheorem{lemma}[theorem]{Lemma}

\newtheorem{corollary}[theorem]{Corollary}

\newtheorem*{proposition*}{Proposition}

\newtheorem*{question*}{Question}

\theoremstyle{remark}

\newtheorem*{definition}{Definition}

\newtheorem*{example}{Example}

\makeatletter
\def\@tocline#1#2#3#4#5#6#7{\relax
 \ifnum #1>\c@tocdepth 
 \else
 \par \addpenalty\@secpenalty\addvspace{#2}%
 \begingroup \hyphenpenalty\@M
 \@ifempty{#4}{%
 \@tempdima\csname r@tocindent\number#1\endcsname\relax
 }{%
 \@tempdima#4\relax
 }%
 \parindent\z@ \leftskip#3\relax \advance\leftskip\@tempdima\relax
 \rightskip\@pnumwidth plus4em \parfillskip-\@pnumwidth
 #5\leavevmode\hskip-\@tempdima
 \ifcase #1
 \or\or \hskip 1em \or \hskip 2em \else \hskip 3em \fi%
 #6\nobreak\relax
 \dotfill\hbox to\@pnumwidth{\@tocpagenum{#7}}\par
 \nobreak
 \endgroup
 \fi}
\makeatother

\def\Q{\mathbb{Q}}

\def\Z{\mathbb{Z}}

\def\N{\mathbb{N}}

\newcommand{\mraisebox}[2]{\mbox{\raisebox{#1}{$#2$}}}

\newcommand{\tmfrac}[2]{\mbox{\large$\frac{#1}{#2}$}}

\fboxrule0.0001pt \fboxsep0pt

\DeclareMathAlphabet{\mathbf}{OML}{cmm}{b}{it}


\def\bnm{\begin{enumerate}}
\def\enm{\end{enumerate}}
\def\ba{\begin{array}}
\def\ea{\end{array}}
\def\bpp{\begin{pmatrix}}
\def\epp{\end{pmatrix}}

\def\hom{\operatorname{Hom}}

\def\op{\operatorname}

\numberwithin{equation}{section}

\begin{document}

\title{Linking forms of amphichiral knots}

\author{Stefan Friedl}
\address{Fakult\"at f\"ur Mathematik\\ Universit\"at Regensburg\\   Germany}
\email{sfriedl@gmail.com}

\author{Allison N.~Miller}
\address{Department of Mathematics, University of Texas, Austin, USA}
\email{amiller@math.utexas.edu}

\author{Mark Powell}
\address{
D\'epartement de Math\'ematiques, Universit\'e du Qu\'ebec \`a Montr\'eal, Canada}
\email{mark@cirget.ca}


\def\subjclassname{\textup{2010} Mathematics Subject Classification}
\expandafter\let\csname subjclassname@1991\endcsname=\subjclassname
\expandafter\let\csname subjclassname@2000\endcsname=\subjclassname
\subjclass{%
 57M25, 
 57M27, 
}

\begin{abstract}
We give a simple obstruction for a knot to be amphichiral, in terms of the homology of the $2$-fold branched cover. We work with unoriented knots, and so obstruct both positive and negative amphichirality.
\end{abstract}

\maketitle

\section{Introduction}
By a knot we mean a $1$-dimensional submanifold of $S^3$ that is diffeomorphic to $S^1$. Given a knot $K$ we denote  its \emph{mirror image} by $mK$, the image of $K$ under an orientation reversing homeomorphism $S^3 \to S^3$.  We say that a knot $K$ is \emph{amphichiral} if $K$ is (smoothly) isotopic to $mK$. Note that we consider unoriented knots, so we do not distinguish between positive and negative amphichiral knots.

In this paper we will see that the homology of the $2$-fold branched cover can be used to show that many  knots are not amphichiral. Before we state our main result we recall some definitions and basic facts.
\bnm
\item \label{item:intro-list-1} Given a knot $K$ we denote the 2-fold cover $\Sigma(K)$ of $S^3$ branched along $K$ by $\Sigma(K)$. If $A$ is a  Seifert matrix for $K$, then a presentation matrix for $H_1(\Sigma(K);\Z)$ is given by $A+A^T$.  See~\cite{Ro90} for details.
\item
The \emph{determinant of $K$} is defined as the order of $H_1(\Sigma(K);\Z)$.
By (\ref{item:intro-list-1}) we have  $\det(K) = \det(A+A^T)$. Alternative definitions are given by $\det(K)=\Delta_K(-1)=J_K(-1)$ where $\Delta_K(t)$ denotes the Alexander polynomial and $J_K(q)$ denotes the Jones polynomial~\cite[Corollary~9.2]{Li97},~\cite[Theorem~8.4.2]{Ka96}.
\item Given an abelian additive group $G$ and a prime $p$,
the \emph{$p$-primary part of $G$} is defined as
\[ \hspace{1cm} G_p\,:=\,\{g\in G\,|\, p^k\cdot g=0\mbox{ for some }k\in \N_0\}.\]
\enm
By studying the linking form on the $2$-fold branched cover we
prove the following theorem which is the main result of this article.

\begin{theorem}\label{mainthm}
Let $K$ be a knot and let $p$ be a prime with $p\equiv 3\mod 4$.
If $K$ is amphichiral, then the $p$-primary part of $H_1(\Sigma(K))$ is either zero or it is not cyclic.
\end{theorem}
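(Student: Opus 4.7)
The plan is to use the linking form $\lambda \colon H_1(\Sigma(K)) \times H_1(\Sigma(K)) \to \Q/\Z$ on the $2$-fold branched cover and track how it behaves under amphichirality. The first step is to observe that $\Sigma(mK)$ is diffeomorphic to $\Sigma(K)$ with the opposite orientation, because $mK$ is the image of $K$ under an orientation-reversing homeomorphism of $S^3$. If $K$ is amphichiral, then $K$ is isotopic to $mK$, hence $\Sigma(K) \cong \Sigma(mK) \cong -\Sigma(K)$ as oriented $3$-manifolds. Since the linking form is defined using the orientation (via Poincar\'e duality), reversing the orientation negates it. We therefore obtain an isometry
\[
\varphi \colon (H_1(\Sigma(K)), \lambda) \xrightarrow{\ \cong\ } (H_1(\Sigma(K)), -\lambda).
\]

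The next step is to pass to the $p$-primary part. Since $\lambda$ is non-singular and its restriction to different primary components is orthogonal, we obtain a non-singular symmetric linking form
\[
\lambda_p \colon G \times G \to \Q/\Z, \qquad G := H_1(\Sigma(K))_p,
\]
and the isometry $\varphi$ restricts to an isometry $(G, \lambda_p) \cong (G, -\lambda_p)$.

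Now I would argue by contradiction: assume $G$ is non-zero and cyclic, so $G \cong \Z/p^k$ for some $k \geq 1$. Pick a generator $g$ and write $\lambda_p(g,g) = a/p^k$ where $a \in \Z$ is coprime to $p$ (non-singularity). Any group automorphism $\varphi$ of $\Z/p^k$ is multiplication by a unit $u \in (\Z/p^k)^\times$. The condition $\lambda_p(\varphi(g), \varphi(g)) = -\lambda_p(g,g)$ then becomes
\[
u^2 a \equiv -a \pmod{p^k},
\]
and, since $a$ is a unit modulo $p^k$, this forces $u^2 \equiv -1 \pmod{p^k}$, and in particular $u^2 \equiv -1 \pmod{p}$.

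The final step is the number-theoretic punchline: for $p \equiv 3 \pmod 4$, the element $-1$ is not a quadratic residue modulo $p$ (by Euler's criterion), contradicting $u^2 \equiv -1 \pmod{p}$. Hence $G$ cannot be non-zero and cyclic, which is the conclusion of the theorem. The main obstacle I expect is Step 1, namely pinning down carefully that amphichirality yields an isometry between the linking form and its negative; this requires the orientation-reversal identity $\Sigma(mK) = -\Sigma(K)$ and the standard fact that the linking form of $-M$ equals $-\lambda_M$. The remaining steps are routine unpacking of the definitions together with elementary number theory.
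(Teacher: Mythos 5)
Your proposal is correct and follows essentially the same route as the paper: the orientation-reversing diffeomorphism $\Sigma(K)\cong -\Sigma(mK)$ combined with amphichirality to get an isometry $(H_p,\lambda_p)\cong(H_p,-\lambda_p)$, restriction to the $p$-primary part, the classification of linking forms on $\Z_{p^k}$, and the quadratic residue fact that $-1$ is not a square mod $p$ for $p\equiv 3 \bmod 4$. All steps, including the ones you flag as routine, match the paper's argument.
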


The following corollary, proven by Goeritz~\cite[p.~654]{Go33} in 1933, gives an even more elementary obstruction for a knot to be amphichiral.

\begin{corollary}\label{maincor}\textbf{\emph{(Goeritz)}}
Suppose $K$ is an amphichiral knot and $p$ is a prime with $p\equiv 3\mod{4}$. Then either $p$ does not divide $\det(K)$ or $p^2$ divides $\det(K)$.
\end{corollary}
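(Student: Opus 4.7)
The plan is to derive the corollary as a short algebraic consequence of Theorem 1.1, using the identification $\det(K) = |H_1(\Sigma(K); \Z)|$ recalled in item (2) of the introduction. The key observation is that when $G$ is a finite abelian group and $p$ is a prime, the order of the $p$\-/primary part $G_p$ is exactly the largest power of $p$ dividing $|G|$. Applied to $G = H_1(\Sigma(K); \Z)$, this says that $|H_1(\Sigma(K))_p|$ equals the $p$\-/part of $\det(K)$.

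With this in hand, I would split into two cases driven by the dichotomy in Theorem 1.1. In the first case, $H_1(\Sigma(K))_p = 0$, which forces the $p$\-/part of $\det(K)$ to equal $1$, i.e.\ $p \nmid \det(K)$. In the second case, $H_1(\Sigma(K))_p$ is nontrivial but not cyclic. Here I would invoke the elementary fact that every finite abelian $p$\-/group that is not cyclic contains a subgroup isomorphic to $\Z/p \oplus \Z/p$ (this follows immediately from the invariant factor decomposition, since a noncyclic finite abelian $p$\-/group has at least two invariant factors, each divisible by $p$). Hence $p^2$ divides $|H_1(\Sigma(K))_p|$, and therefore $p^2 \mid \det(K)$.

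Combining the two cases gives precisely the stated alternative: either $p \nmid \det(K)$ or $p^2 \mid \det(K)$. There is no real obstacle here; the content of the corollary is entirely packaged into Theorem 1.1, and the remaining work is the standard translation between ``the $p$\-/primary part is noncyclic'' and ``the $p$\-/valuation of the order is at least $2$.'' The only minor point worth flagging explicitly in the write\-/up is that amphichirality of $K$ is invoked solely through the hypothesis of Theorem 1.1, so no new geometric input is needed at this stage.
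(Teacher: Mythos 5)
Your proposal is correct and follows essentially the same route as the paper: identify $\det(K)$ with $|H_1(\Sigma(K))|$, apply Theorem~\ref{mainthm} to the $p$-primary part, and use that a nontrivial noncyclic abelian $p$-group has order divisible by $p^2$. The only cosmetic difference is that the paper phrases this as a single implication (if $p \mid \det(K)$ then $H_p \neq 0$, hence noncyclic, hence $p^2 \mid \det(K)$) rather than your two-case split, and it leaves the noncyclic-implies-$p^2$ step unjustified where you spell it out via the invariant factor decomposition.
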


In fact Goeritz showed an even stronger statement: given such $p$ the maximal power of $p$ that divides $\det(K)$ is even.
This elegant and rather effective result of Goeritz also appears in Reidemeister's classic textbook on knot theory \cite[p.~30]{Re74} from the 1930s, but it did not appear in any of the more modern textbooks.

In the following we present some examples which show the strength of the Goeritz theorem and we also show that our Theorem~\ref{mainthm} is independent of the Goeritz theorem.

\begin{example}\mbox{}
\bnm[(i)]
\item For the trefoil $3_1$ we have $\det(3_1)=3$, so Corollary~\ref{maincor} immediately implies the very well known fact that the trefoil is not amphichiral. In most modern accounts of knot theory one uses  either the signature of a knot or the Jones polynomial applied to prove that the trefoil is not amphichiral.  We think that it is worth recalling that even the determinant can be used to prove this statement.
\item As a reality check, consider the figure eight knot $4_1$, which is amphichiral~\cite[p.~17]{BZH14}. We have $\det(4_1)=5$, which is consistent with Corollary~\ref{maincor}.
\item  Corollary~\ref{maincor} also provides information on occasions when many other, supposedly more powerful invariants,  fail.
For example the chirality of the knot $K=10_{71}$ is difficult to detect, since the Tristram-Levine signature function~\cite{Le69,Tr69} of $K$ is identically zero and the HOMFLY and Kauffman polynomials of $K$ do not detect chirality. In fact in~\cite{RGK94} Chern-Simons invariants were used to show that $10_{71}$ is not amphichiral. However, a quick look at Knotinfo~\cite{CL} shows that $\det(10_{71})=77=7\cdot 11$, i.e.\ $10_{71}$  does not satisfy the criterion from Corollary~\ref{maincor} and so we see that this knot is not amphichiral.
\item On the other hand it is also quite easy to find a knot for which Corollary~\ref{maincor}, and also Theorem~\ref{mainthm} below, fail to show that it is not amphichiral. For example, the torus knot $T(5,2)=5_1$ has determinant $\det(5_1)= 5$, but is not amphichiral since its signature is nonzero.
\item
One quickly finds examples of knots where Theorem~\ref{mainthm} detects chirality, but Corollary~\ref{maincor} fails to do so.
Consider the Stevedore's knot $6_1$. A Seifert matrix is given by $A=\mraisebox{0.05cm}{\hspace{-0.05cm}\scriptsize{\scalebox{0.83}{\Big(}\ba{rr} 1&\phantom{-}0\\ 1&-2\ea\scalebox{0.83}{\Big)}}\hspace{-0.05cm}}$. By the aforementioned formula a presentation matrix for $H_1(\Sigma(K))$ is given by $A+A^T$. It is straightforward to compute that $H_1(\Sigma(K))\cong \Z_9$. So it follows from Theorem~\ref{mainthm}, applied with $p=3$, that the Stevedore's knot is not amphichiral.
\enm
\end{example}

We refer to  \cite{Ha80}, \cite[Proposition~1]{CM83} and~\cite[Theorem~9.4]{Hi12}
for results on Alexander polynomials of amphichiral knots. In principle
these results give stronger obstructions to a knot being amphichiral than the Goeritz Theorem, but in practice it seems to us that these obstructions are fairly hard to implement.

To the best of our knowledge Theorem~\ref{mainthm} is the first obstruction to a knot being amphichiral that uses the structure of the homology module.
We could not find a result in the literature that implies Theorem~\ref{mainthm}, essentially because the previous results on Alexander polynomials mentioned above did not consider the structure of the Alexander module. Similarly Goeritz~\cite{Go33} did not consider the structure of the first homology for the $2$-fold branched cover.  So to the best of our knowledge, and to our surprise, Theorem~\ref{mainthm} seems to be new.  Notwithstanding, the selling point of our theorem is not that it yields any new information on amphichirality, but that the obstruction is very fast to compute and frequently effective.

As mentioned above, the proof of Theorem~\ref{mainthm} relies on the study of the linking form on the $2$-fold branched cover $\Sigma(K)$. Similarly, as in \cite[Theorem~9.3]{Hi12}, one can use the Blanchfield form~\cite{Bl57} to obtain restrictions on the primary parts of the Alexander module. Moreover one can use twisted Blanchfield forms~\cite{Po16} to obtain conditions on twisted Alexander polynomials~\cite{Wa94,FV10} and twisted Alexander modules of amphichiral knots. Our initial idea had been to use the latter invariant. But we quickly found that even the elementary invariants studied in this paper are fairly successful.  To keep the paper short we refrain from discussing these generalisations.

The paper is organised as follows. Linking forms and basic facts about them are recalled in Section~\ref{section:linking-forms}. The proofs of Theorem~\ref{mainthm} and Corollary~\ref{maincor} are given in Section~\ref{section-proofs}.

\subsection*{Acknowledgments.}
We are indebted to Cameron Gordon, Chuck Livingston and Loren\-zo Traldi for pointing out the paper of Goeritz~\cite{Go33}, which contains Corollary~\ref{maincor}. Despite a fair amount of literature searching before we posted the first version, we did not come across this paper. We are pleased that the beautiful result of Goeritz was brought back from oblivion.

The authors are grateful to the Hausdorff Institute for Mathematics in Bonn, in whose excellent research atmosphere this paper was born. We are also grateful to Jae Choon Cha and Chuck Livingston~\cite{CL} for providing Knotinfo, which is an indispensable tool for studying small crossing knots.

The first author acknowledges the support provided by the SFB 1085 `Higher
Invariants' at the University of Regensburg, funded by the DFG.
The third author is supported by an NSERC Discovery Grant.

\section{Linking forms}\label{section:linking-forms}

\begin{definition}
\mbox{}
\begin{enumerate}[(i)]
\item
A \emph{linking form} on a finitely generated abelian group $H$ is a map $\lambda\colon H\times H\to \Q/\Z$ which has the following properties:
\bnm
\item $\lambda$ is bilinear and symmetric,
\item $\lambda$ is nonsingular, that is the adjoint map $H\to \hom(H,\Q/\Z)$ given by $a\mapsto (b\mapsto \lambda(a,b))$ is an isomorphism.
\enm
\item Given a linking form $\lambda\colon H\times H\to \Q/\Z$ we denote the linking form on $H$ given by $(-\lambda)(a,b)=-\lambda(a,b)$ by $-\lambda$.
\end{enumerate}
\end{definition}

\begin{lemma}\label{lem:linking-forms-on-cyclic-modules}
Let $p$ be a prime and $n\in \N_0$. Every linking form $\lambda$ on $\Z_{p^n}$ is given by
\[ \ba{rcl} \Z_{p^n}\times \Z_{p^n}&\to & \Q/\Z\\
(a,b)&\mapsto &\lambda(a,b)=\tmfrac{k}{p^n}a\cdot b\in \Q/\Z\ea\]
for some $k\in \Z$ that is coprime to $p$.
\end{lemma}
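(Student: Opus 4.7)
\medskip

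\noindent\textbf{Proof proposal.} The plan is to pin down $\lambda$ from a single value and then translate the nonsingularity condition into an arithmetic condition on that value. First, by bilinearity the form $\lambda$ on the cyclic group $\Z_{p^n}$ is completely determined by $r:=\lambda(1,1)\in\Q/\Z$, and the formula $\lambda(a,b)=ab\cdot r$ is automatically symmetric because $\Z_{p^n}$ is commutative; so no symmetry constraint arises on $r$.

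Next, since $p^n\cdot 1=0$ in $\Z_{p^n}$, bilinearity forces $p^n r=0$ in $\Q/\Z$, which is exactly the statement that $r$ admits a representative of the form $k/p^n$ with $k\in\Z$ (well-defined modulo $p^n$). So every linking form on $\Z_{p^n}$ has the shape claimed in the lemma for some integer $k$; what remains is to characterise which $k$ yield a nonsingular form.

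I would then analyse the adjoint map $\Z_{p^n}\to\hom(\Z_{p^n},\Q/\Z)$ sending $a$ to the homomorphism $b\mapsto (k/p^n)ab$. Both source and target are finite groups of the same order (the Pontryagin dual of $\Z_{p^n}$ is again $\Z_{p^n}$), so it suffices to check injectivity. An element $a$ lies in the kernel precisely when $(k/p^n)ab\in\Z$ for every $b\in\Z_{p^n}$; taking $b=1$ shows this is equivalent to $p^n\mid ka$. Hence the adjoint is injective if and only if $p^n\mid ka$ implies $p^n\mid a$, which holds if and only if $\gcd(k,p^n)=1$, that is, $\gcd(k,p)=1$.

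The main (and really only) obstacle is to keep the translation between divisibility conditions and the kernel of the adjoint map straight; once that dictionary is in place, everything follows directly. Conversely, for any $k$ with $\gcd(k,p)=1$ the formula given does define a bilinear, symmetric, nonsingular pairing, so the family described in the statement exhausts all linking forms on $\Z_{p^n}$.
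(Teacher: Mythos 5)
Your proposal is correct and takes essentially the same route as the paper: bilinearity pins the form down to the single value $\lambda(1,1)=\tmfrac{k}{p^n}$, and nonsingularity of the adjoint forces $\gcd(k,p)=1$. The only difference is cosmetic --- the paper argues the contrapositive by exhibiting the nonzero subgroup $p^{n-1}\Z_{p^n}$ in the kernel of the adjoint when $p\mid k$, whereas you compute the kernel exactly and add the (unneeded but harmless) converse that every coprime $k$ yields a linking form.
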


\begin{proof}
Pick $k\in \Z$ such that $\frac{k}{p^n}=\lambda(1,1)\in \Q/\Z$. By the bilinearity we have $\lambda(a,b)=\frac{k}{p^n}a\cdot b\in \Q/\Z$ for all $a,b\in \Z_{p^n}$. It follows easily from the fact that $\lambda$ is nonsingular that $k$ needs to be coprime to $p$. To wit, if $p|k$ then $k=p\cdot k'$, so
for any $a\in p^{n-1}\Z_{p^n}$ and $b\in \Z_{p^n}$ we have $\lambda(a,b)= k' apb/p^{n-1}=0\in \Q/\Z$.  Therefore the non-trivial subgroup $p^{n-1}\Z_{p^n}$ lies in the kernel of the adjoint map, so the adjoint map is not injective.
\end{proof}

\noindent We recall the following well known lemma.

\begin{lemma}\label{lem:split-off-p-primary-summand}
Let $\lambda\colon H\times H\to \Q/\Z$ be a linking form and let $p$ be a prime. The restriction of $\lambda$ to the $p$-primary part  $H_p$ of $H$ is also nonsingular.
\end{lemma}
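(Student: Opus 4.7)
The plan is to exploit the primary decomposition $H=\bigoplus_{q\text{ prime}}H_q$ together with the fact that the bilinearity of $\lambda$ forces distinct primary components to be orthogonal, and then to deduce nonsingularity on $H_p$ from the nonsingularity of $\lambda$ on $H$ by pushing witnesses into the $p$-primary summand.

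First I would record that if $a\in H_p$ and $b\in H_q$ for distinct primes $p\neq q$, then $\lambda(a,b)=0$. Indeed, choose $k,m\in\N_0$ with $p^k a=0$ and $q^m b=0$. Bilinearity gives $p^k\lambda(a,b)=\lambda(p^k a,b)=0$ and $q^m\lambda(a,b)=\lambda(a,q^m b)=0$ in $\Q/\Z$. Since $\gcd(p^k,q^m)=1$, some $\Z$-linear combination of $p^k$ and $q^m$ equals $1$, and therefore $\lambda(a,b)=0$. Consequently the primary decomposition $H=\bigoplus_q H_q$ is orthogonal with respect to $\lambda$.

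Next I would verify nonsingularity of $\lambda|_{H_p\times H_p}$ directly. Let $a\in H_p$ be nonzero. Because $\lambda$ is nonsingular on $H$, the adjoint of $\lambda$ at $a$ is a nonzero homomorphism $H\to\Q/\Z$, so there exists $b\in H$ with $\lambda(a,b)\neq 0$. Decompose $b=\sum_q b_q$ with $b_q\in H_q$. By the orthogonality established above, $\lambda(a,b_q)=0$ for every $q\neq p$, and hence
\[
0\;\neq\;\lambda(a,b)\;=\;\sum_q \lambda(a,b_q)\;=\;\lambda(a,b_p),
\]
so $b_p\in H_p$ witnesses the nontriviality of the adjoint of $\lambda|_{H_p}$ at $a$. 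Thus the adjoint $H_p\to\hom(H_p,\Q/\Z)$ is injective; since $H_p$ is finite (the nonsingularity of $\lambda$ on the finitely generated $H$ forces $H$ to be torsion), an injective endomorphism of finite abelian groups between $H_p$ and $\hom(H_p,\Q/\Z)$, which have the same order, is an isomorphism, giving nonsingularity.

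No step is especially hard; the only subtlety is remembering to use the nonsingularity of $\lambda$ on all of $H$ together with orthogonality of distinct primary parts, rather than attempting to restrict the adjoint map naively. This observation is exactly what makes the passage from $\lambda$ to $\lambda|_{H_p}$ clean.
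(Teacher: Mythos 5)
Your proof is correct, and its core is the same as the paper's: both arguments rest on the orthogonality of distinct primary components, proved via coprimality and a B\'ezout identity. In fact your version of that step is tighter than the paper's. You choose $k,m$ with $p^ka=0$, $q^mb=0$ and use $\gcd(p^k,q^m)=1$, whereas the paper writes $px+qy=1$ and asserts $\lambda(a,b)=\lambda(pxa,b)+\lambda(a,qyb)=0$, even though $pxa$ and $qyb$ need not vanish when $a,b$ have order higher than $p,q$; the paper's computation really requires the prime-power B\'ezout identity you used. Where you genuinely diverge is the finish: the paper stops at ``it suffices to show that there exists an orthogonal decomposition $H=H_p\oplus H'$,'' leaving implicit the standard fact that the adjoint of a nonsingular form is block-diagonal with respect to an orthogonal splitting, hence restricts to an isomorphism on each summand. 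You instead verify nonsingularity on $H_p$ by hand: injectivity of the adjoint by pushing a witness $b$ into its $p$-component, then surjectivity by the cardinality count $|H_p|=|\hom(H_p,\Q/\Z)|$ for finite $H_p$. For that count you correctly note that nonsingularity forces the finitely generated group $H$ to be torsion (a free summand would make $\hom(H,\Q/\Z)$ contain $(\Q/\Z)^r$, which is not finitely generated), a point the paper's definition quietly presupposes. Both routes work; the paper's is shorter once the block-diagonal fact is granted, while yours is self-contained and makes the finiteness of $H$ explicit.
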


\begin{proof}
It suffices to show that there exists an orthogonal decomposition $H=H_p\oplus H'$.
Since $H$ is the direct sum of its $p$-primary subgroups we only need to show that if $p,q$ are two different primes and if $a\in H_p$ and $b\in H_q$, then $\lambda(a,b)=0$. So let $p$ and $q$ be two distinct primes. Since $p$ and $q$ are coprime there exist $x,y\in \Z$ with $px+qy=1$. It follows that $\lambda(a,b)=\lambda((px+qy)a,b)=\lambda(pxa,b)+\lambda(a,qyb)=0$.
\end{proof}

Let $\Sigma$ be an oriented rational homology 3-sphere, i.e.\ $\Sigma$ is a 3-manifold with $H_*(\Sigma;\Q)\cong H_*(S^3;\Q)$. Consider the maps
\[ H_1(\Sigma;\Z)\,\,\xrightarrow{\,\op{PD}^{-1}\,}\,\, H^2(\Sigma;\Z)\,\,\xleftarrow{\,\delta\,}\,\, H^1(\Sigma;\Q/\Z)\,\,\xrightarrow{\,\op{ev}\,}\,\, \hom(H_1(\Sigma;\Z),\Q/\Z),\]
where the maps are given as follows:
\bnm
\item the first map is given by the inverse of Poincar\'e duality, that is the inverse of the map given by capping with the fundamental class of the \emph{oriented} manifold $\Sigma$;
\item   the second map is the connecting homomorphism in the long exact sequence in cohomology corresponding to the short exact sequence $0\to \Z\to \Q\to \Q/\Z\to 0$ of coefficients; and
\item  the third map is the evaluation map.
\enm

The first map is an isomorphism by Poincar\'e duality, the second map is an isomorphism since $\Sigma$ is a rational homology sphere and so $H^i(\Sigma;\Q)=H_{3-i}(\Sigma;\Q)=0$ for $i=1,2$, and the third map is an isomorphism by the universal coefficient theorem, and the fact that $\Q/\Z$ is an injective $\Z$-module. Denote the corresponding isomorphism by $\Phi_\Sigma \colon H_1(\Sigma;\Z)\to \hom(H_1(\Sigma;\Z),\Q/\Z)$ and define
\[ \ba{rcl}\lambda_\Sigma\colon H_1(\Sigma;\Z)\times H_1(\Sigma;\Z)&\to & \Q/\Z\\
(a,b)&\mapsto & (\Phi_\Sigma(a))(b).\ea\]

\begin{lemma}
For every oriented rational homology 3-sphere $\Sigma$, the map $\lambda_\Sigma$ is a linking form.
\end{lemma}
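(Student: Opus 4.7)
The plan is to verify the three properties defining a linking form: bilinearity, nonsingularity, and symmetry. The first two are essentially free from the construction; all of the content is in symmetry.

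Bilinearity is immediate because $\Phi_\Sigma$ is a composition of three $\Z$-linear maps (inverse Poincar\'e duality, the connecting homomorphism $\delta$, and the evaluation $\op{ev}$) and the evaluation pairing is bilinear. Nonsingularity is also immediate: the preceding discussion already established that each of Poincar\'e duality, $\delta$ (using $H^i(\Sigma;\Q)=0$ for $i=1,2$), and $\op{ev}$ (using universal coefficients and injectivity of $\Q/\Z$ as a $\Z$-module) is an isomorphism, so $\Phi_\Sigma$ itself is an isomorphism and hence $\lambda_\Sigma$ has nonsingular adjoint.

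The substantive task is symmetry. My plan is to reinterpret $\lambda_\Sigma$ geometrically. Given $a,b\in H_1(\Sigma;\Z)$, choose disjoint embedded oriented loops $\alpha,\beta$ representing them. Since $\Sigma$ is a rational homology sphere, $H_1(\Sigma;\Z)$ is finite, so there exist $n\in\N$ and a $2$-chain $C$ in $\Sigma$ with $\partial C=n\alpha$, which we may assume transverse to $\beta$. Define
\[
\ell(a,b):=\tfrac{1}{n}(C\cdot\beta)\in\Q/\Z,
\]
where $C\cdot\beta$ is the signed count of intersections. I would then show that $\ell(a,b)=\lambda_\Sigma(a,b)$ by unwinding the cochain-level description: the Poincar\'e dual of $\alpha$ is represented by a cocycle supported on a dual $2$-cocycle of $\alpha$, a $\Q/\Z$-cochain lift whose image under $\delta$ matches this dual up to a factor of $\tfrac{1}{n}$, and evaluating against $\beta$ (via its Poincar\'e dual) recovers the intersection count $\tfrac{1}{n}(C\cdot\beta)$. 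Well-definedness of $\ell(a,b)$ independent of all choices of $\alpha$, $\beta$, $n$, and $C$ is then clear from the corresponding statement for $\lambda_\Sigma$.

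With the geometric description in hand, symmetry becomes a routine transversality argument. Choose also $m\in\N$ and a $2$-chain $D$ transverse to $\alpha$, $\beta$, and $C$ with $\partial D=m\beta$. Then $mC$ and $nD$ have the same $0$-dimensional boundary pairing in the following sense: by Stokes/transversality in the oriented $3$-manifold $\Sigma$,
\[
m(C\cdot\beta)\;=\;C\cdot\partial D\;=\;\partial C\cdot D\;=\;n(\alpha\cdot D),
\]
the middle equality using that $1+2$ is odd so there is no sign. Dividing by $mn$ gives $\ell(a,b)\equiv\ell(b,a)\pmod\Z$. The main obstacle I anticipate is keeping the identifications of Poincar\'e duality and of $\delta$ at the cochain level consistent with the geometric counts, so that the scalar factor of $\tfrac{1}{n}$ appears with the correct sign; once this bookkeeping is settled, symmetry follows immediately from graded commutativity of the intersection product in dimension $3$.
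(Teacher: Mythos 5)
Your proposal is correct in outline, but it takes a genuinely different route from the paper, whose own proof is deliberately minimal: nonsingularity is observed to be a restatement of the already-established fact that $\Phi_\Sigma$ is a composition of three isomorphisms (exactly as in your second paragraph), and symmetry is not proved at all but outsourced, citing Seifert's original and, in the paper's words, ``slightly non-rigorous'' argument~\cite{Se33}, with rigorous modern treatments in~\cite{Po16} and~\cite[Chapter~48.3]{Fr17}. What you sketch for symmetry is in substance precisely that classical argument: identify $\lambda_\Sigma$ with the geometric linking number $\ell(a,b)=\frac{1}{n}(C\cdot\beta)$ and then apply the transversality identity $C\cdot \partial D=\partial C\cdot D$. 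Your route buys a self-contained proof, at a cost you have correctly flagged yourself: the cochain-level identification of $\lambda_\Sigma$ (through $\op{PD}^{-1}$, the Bockstein $\delta$, and evaluation) with the intersection count, with consistent signs, is the entire technical content, and it is exactly what the cited references carry out in detail. One point deserves more care than your one-line justification that ``$1+2$ is odd so there is no sign'': the sign in the boundary formula $\partial(C\cap D)=\partial C\cap D\pm C\cap \partial D$ is precisely what makes torsion linking forms symmetric on $3$-manifolds but antisymmetric on, say, $5$-manifolds (in general $(-1)^{\ell+1}$-symmetric on $(2\ell+1)$-manifolds), so it must be pinned down from a fixed orientation convention rather than from the parity of $1+2$; with the standard convention one indeed gets $\partial C\cdot D=C\cdot\partial D$ and hence $\ell(a,b)=\ell(b,a)$, so your conclusion stands once that bookkeeping is done.
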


\begin{proof}
We already explained why $\Phi_\Sigma$ is an isomorphism, which is equivalent to the statement that $\lambda_\Sigma$ is nonsingular. Seifet~\cite[p.~814]{Se33} gave a slightly non-rigorous proof that  $\lambda_\Sigma$ is symmetric, more modern proofs are given in \cite{Po16} or alternatively in \cite[Chapter~48.3]{Fr17}.
\end{proof}

\noindent The following lemma is an immediate consequence of the definitions and the obvious fact that for an oriented manifold $M$ we have $[-M]=-[M]$.

\begin{lemma}\label{lem:linking-form-reverse-orientation}
Let $\Sigma$ be an oriented rational homology 3-sphere. We denote the same manifold but with the opposite orientation by $-\Sigma$. For any $a,b\in H_1(\Sigma;\Z)=H_1(-\Sigma;\Z)$ we have
\[ \lambda_{-\Sigma}(a,b)\,\,=\,\,-\lambda_{\Sigma}(a,b).\]
\end{lemma}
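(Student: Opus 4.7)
The plan is to trace through the three-step construction of $\Phi_\Sigma$ (and hence of $\lambda_\Sigma$) and observe that only the Poincar\'e duality step is sensitive to the choice of orientation. Among the three isomorphisms used to build $\Phi_\Sigma$, the Bockstein connecting homomorphism $\delta\colon H^1(\Sigma;\Q/\Z) \to H^2(\Sigma;\Z)$ is induced from a short exact sequence of coefficients and depends only on the underlying topological space; similarly the evaluation map $\op{ev}\colon H^1(\Sigma;\Q/\Z) \to \hom(H_1(\Sigma;\Z),\Q/\Z)$ is purely algebraic and orientation-independent. So the entire orientation-dependence is concentrated in the first map, the inverse of Poincar\'e duality.

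Next I would unpack that map. Poincar\'e duality is the isomorphism $\cdot\cap [\Sigma]\colon H^2(\Sigma;\Z)\to H_1(\Sigma;\Z)$ given by cap product with the fundamental class. Using $[-\Sigma] = -[\Sigma]$ together with the fact that cap product is $\Z$-linear in the homology argument, we get that the Poincar\'e duality map for $-\Sigma$ is the negative of the one for $\Sigma$. Since for any isomorphism $f$ of abelian groups one has $(-f)^{-1} = -(f^{-1})$, it follows that the inverse Poincar\'e duality map for $-\Sigma$ is also the negative of the one for $\Sigma$.

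Combining the two observations, the composite $\Phi_{-\Sigma}$ equals $-\Phi_\Sigma$ as a map $H_1(\Sigma;\Z) \to \hom(H_1(\Sigma;\Z),\Q/\Z)$. Substituting into the definition $\lambda_\Sigma(a,b) = (\Phi_\Sigma(a))(b)$ gives $\lambda_{-\Sigma}(a,b) = -\lambda_\Sigma(a,b)$, as desired. There is no serious obstacle here; the only point requiring minor care is the sign behaviour of inversion, and ensuring that the Bockstein and evaluation maps really are constructed without reference to orientation (which they are, since they come from a short exact sequence of coefficient groups and a universal-coefficient computation respectively).
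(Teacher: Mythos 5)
Your proposal is correct and follows exactly the route the paper takes: the paper's proof consists of the single remark that the lemma is ``an immediate consequence of the definitions and the obvious fact that for an oriented manifold $M$ we have $[-M]=-[M]$,'' and your argument is precisely the careful expansion of that remark, isolating the orientation-dependence in the Poincar\'e duality map while noting that the Bockstein and evaluation maps are orientation-independent. No gaps; you have simply written out the details the authors left implicit.
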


Let $K\subset S^3$ be a knot and let $\Sigma(K)$ be the 2-fold cover of $S^3$ branched along $K$. Note that $\Sigma(K)$ admits a unique orientation such that the projection $p\colon \Sigma(K)\to S^3$ is orientation-preserving outside of the branch locus $p^{-1}(K)$. Henceforth we will always view $\Sigma(K)$ as an oriented manifold.

\begin{lemma}\label{lem:branched-covers-knots}\mbox{}
\bnm[(i)] 
\item\label{item:lem:branched-covers-item-1} Let $K$ and $J$ be two knots. If $K$ and $J$ are (smoothly) isotopic, then there exists an orientation-preserving diffeomorphism between $\Sigma(K)$ and $\Sigma(J)$.
\item\label{item:lem:branched-covers-item-2} Let $K$ be a knot. There exists an orientation-reversing diffeomorphism $\Sigma(K)\to \Sigma(mK)$.
\enm
\end{lemma}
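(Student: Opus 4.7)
The plan is to deduce both parts from the general principle that the $2$-fold branched cover construction is functorial: any diffeomorphism $f\colon S^3 \to S^3$ with $f(K) = J$ lifts to a unique diffeomorphism $\widetilde f \colon \Sigma(K) \to \Sigma(J)$ compatible with the projections. On the unbranched parts this lift is just the ordinary covering-space lift of $f$ restricted to the knot complements (using that both $2$-fold covers are classified by the unique index-two subgroup of $H_1$ of the complement, which is preserved by any diffeomorphism of pairs), and one extends over the branch locus in the standard way.

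For part~\ref{item:lem:branched-covers-item-1}, I would invoke the isotopy extension theorem to promote the isotopy from $K$ to $J$ to an ambient isotopy $F_t \colon S^3 \to S^3$ with $F_0 = \id_{S^3}$ and $F_1(K) = J$. In particular $F_1$ is orientation-preserving, being isotopic to the identity. Then the lift $\widetilde{F_1}\colon \Sigma(K) \to \Sigma(J)$ supplied by the previous paragraph is the required diffeomorphism, and it remains to check that it is orientation-preserving. This follows from the orientation convention: both $\Sigma(K)$ and $\Sigma(J)$ are oriented so that the branched covering projections are orientation-preserving off the branch loci, and on these complements $\widetilde{F_1}$ covers $F_1$, which preserves orientation. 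Hence $\widetilde{F_1}$ preserves orientation on an open dense set, and therefore everywhere.

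For part~\ref{item:lem:branched-covers-item-2}, the definition of $mK$ provides an orientation-reversing diffeomorphism $\phi \colon S^3 \to S^3$ with $\phi(K) = mK$. Lifting $\phi$ by the same general principle gives a diffeomorphism $\widetilde\phi \colon \Sigma(K) \to \Sigma(mK)$, and repeating the orientation argument, but now with $\phi$ reversing orientation on $S^3$, shows $\widetilde\phi$ is orientation-reversing.

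The step I expect to be slightly delicate is the lifting itself: one must verify that the required diffeomorphism of knot complements lifts to the branched covers and extends smoothly across the branch locus. This is a standard fact, most cleanly proved by lifting $f$ on the complement (where it corresponds to the unique index-two subgroup of the first homology being preserved by $f_*$) and then observing that a tubular neighbourhood of $K$ is mapped diffeomorphically to a tubular neighbourhood of $f(K)$ so that the branched $2$-fold covers of these solid tori glue equivariantly. Once this lifting result is in hand, the orientation bookkeeping in both parts is immediate from the definition of the orientation on $\Sigma(\cdot)$.
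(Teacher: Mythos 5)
Your proposal is correct and takes essentially the same route as the paper, whose entire proof consists of citing the isotopy extension theorem for part (i) and declaring part (ii) an immediate consequence of the definitions; your lifting argument and orientation bookkeeping simply fill in the details the paper leaves implicit. (One tiny slip: the lift of a diffeomorphism of pairs to the branched covers is unique only up to composition with the deck involution, but this is harmless since either lift has the required orientation behaviour.)
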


\begin{proof}
The first statement follows immediately from the isotopy extension theorem~\cite[Theorem~II.5.2]{Ko93}. The second statement is an immediate consequence of the definitions.
\end{proof}

\section{Proofs}\label{section-proofs}

\subsection{Proof of Theorem~\ref{mainthm}}
Let $K$ be a knot and let $p$ be a prime.
By Lemma~\ref{lem:branched-covers-knots}~(\ref{item:lem:branched-covers-item-2}) there exists an orientation-preserving diffeomorphism $f\colon \Sigma(K)\to -\Sigma(mK)$
which means that $f$ induces an isometry
from $\lambda_{\Sigma(K)}$ to $\lambda_{-\Sigma(mK)}$.
It follows from Lemma~\ref{lem:linking-form-reverse-orientation}
that $f$ induces an isometry
from the linking form $\lambda_{\Sigma(K)}$ to $-\lambda_{\Sigma(mK)}$.
In particular $f$ induces an isomorphism $H_1(\Sigma(K);\Z)_p\to H_1(\Sigma(mK);\Z)_p$ between the $p$-primary parts of the underlying abelian groups.

Now suppose that $K$ is amphichiral, meaning that $mK$ is isotopic to $K$.
Write $H=H_1(\Sigma(K))$, denote the $p$-primary part of $H$ by $H_p$, and let $\lambda_p\colon H_p\times H_p\to \Q/\Z$ be the restriction of the linking form $\lambda_{\Sigma(K)}$ to $H_p$.
It follows from Lemma~\ref{lem:split-off-p-primary-summand} that $\lambda_p$ is also a linking form.
Then by Lemma~\ref{lem:branched-covers-knots} (\ref{item:lem:branched-covers-item-2}) and the above discussion, there exists an isometry
\[\Phi\colon (H_p,-\lambda_p)\xrightarrow{\cong} (H_p,\lambda_p).\]

Now suppose that $H_p$ is cyclic and nonzero, so that we can make the identification $H_p=\Z_{p^n}$ for some $n\in \N$.
By Lemma~\ref{lem:linking-forms-on-cyclic-modules}, there exists a $k\in \Z$, coprime to $p$, such that $\lambda_p(a,b)=\frac{k}{p}ab\in \Q/\Z$ for all $a,b\in \Z_{p^n}$.

The isomorphism $\Phi\colon \Z_{p^n}\to \Z_{p^n}$ is given by multiplication by some $r\in \Z$ that is coprime to $p$. We have that
\[-\tmfrac{k}{p^n}\,\,=\,\,(-\lambda_p)(1,1)\,\,=\,\,\lambda_p(r\cdot 1,r\cdot 1)\,\,=\,\,\tmfrac{k}{p^n}r^2\,\,\in\,\,\Q/\Z.\]
Thus there exists $m\in \Z$ such that $-\tmfrac{k}{p^n} = \tmfrac{k}{p^n}r^2 +m \in \Z$, so $-k=kr^2 +p^n m$.  Working modulo $p$ we obtain $-k \equiv kr^2 \mod{p}$.  Since $k$ is coprime to $p$, it follows that $-1\equiv r^2\mod p$.

But it is a well known fact from classical number theory, see e.g.\ \cite[p.~133]{Co09}, that for an odd prime $p$ the number $-1$ is a square mod $p$ if and only if $p\equiv 1\mod{4}$. Thus we have shown that for an amphichiral knot, and a prime $p$ such that $H_p$ is nontrivial and cyclic, we have that $p\equiv 1\mod 4$. This concludes the proof of (the contrapositive of) Theorem~\ref{mainthm}.

\subsection{The proof of Corollary~\ref{maincor}}

Let $K$ be an amphichiral knot. Recall that by definition of the determinant we have $\det(K)=|H_1(\Sigma(K))|$. Now let $p$ be a prime with $p\equiv 3\mod 4$ that divides $\det(K)$. We have to show that $p^2$ divides $\det(K)$.
Denote the $p$-primary part of $H_1(\Sigma(K))$ by $H_p$. Since $p$ divides $\det(K)$, we see that $H_p$ is nonzero.
By Theorem~\ref{mainthm}, we know that $H_p$ is not cyclic.
But this implies that $p^2$ divides the order of $H_p$, which in turn implies that $p^2$ divides $\det(K)$.
This concludes the proof of Corollary~\ref{maincor}.

\end{document}